\definecolor{astral}{RGB}{46,116,181}
\newtheorem{theorem}{Theorem}[section]
\newtheorem{remark}{Remark}[section]
\numberwithin{equation}{section}
\begin{document}
\begin{frontmatter}
\title{An augmented Lagrangian-based preconditioning technique for  a class of block three-by-three  linear systems}
\author{Fatemeh P. A. Beik$^{a,b}$ and Michele Benzi$^{c}$}

\address{$^{a}$Department of Mathematics, Vali-e-Asr University of Rafsanjan, P.O. Box 518, Rafsanjan, Iran\\
	$^{b}$Department of Mathematics, Simon Fraser University, Burnaby, BC, Canada\\
	$^{a,b}${\tt f.beik@vru.ac.ir;fatemeh$_{-}$panjeh$_{-}$ali$_{-}$beik@sfu.ca}\\
	$^{c}$Scuola Normale Superiore, Piazza dei Cavalieri, 7, 56126, Pisa, Italy\\
	$^{c}${\tt michele.benzi@sns.it }
}

\begin{abstract}
	
We propose an augmented Lagrangian-based preconditioner to accelerate the convergence of Krylov subspace methods applied to linear systems of equations
with a block  three-by-three  structure such as those arising from 
mixed finite element discretizations of the coupled Stokes-Darcy flow problem. We analyze the spectrum of the preconditioned matrix and we show how the  new
preconditioner can be efficiently applied. Numerical experiments  are reported to illustrate the effectiveness of the preconditioner in conjunction with flexible GMRES for solving 
linear systems of equations arising from a 3D test problem.
\end{abstract}

\begin{keyword}
Krylov subspace methods; Preconditioning techniques; Spectral analysis; Coupled Stokes-Darcy flow problem.
\end{keyword}

\end{frontmatter}
\noindent 2020 {\it Mathematics subject classification\/}: 65F10.

\section{Introduction} \label{sec1}
Consider the following $(n+m+p)\times (n+m+p)$ linear system of equations:
\begin{equation}\label{eq1}
	\mathcal{A}u = \left[ {\begin{array}{*{20}c}
			{A_{11} } & {A_{{12} } } & 0  \\
			{ A_{21 } } & {A_{22 } } & {B^T }  \\
			0 & B & 0  \\
	\end{array}} \right]\left[ {\begin{array}{*{20}c}
			{u_1 }  \\
			{u_2 }  \\
			{u_3}  \\
	\end{array}} \right] = \left[ {\begin{array}{*{20}c}
			{b_1 }  \\
			{b_2 }  \\
			{b_3}  \\
	\end{array}} \right] = b,
\end{equation}
where $A_{11}$ and $A_{22}$  are both symmetric positive definite (SPD), $A_{21}=-A_{12}^T$ and $B$ has full row rank.
In this paper, we are especially interested in the case where the above system corresponds to inf-sup stable mixed finite element discretizations of the coupled Stokes-Darcy flow problem; 
see \cite{Cai,Chid} for further details. Krylov subspace methods (such as GMRES) in conjunction with suitable preconditioners  are frequently the method of choice
for computing approximate solutions  of such linear systems of equations; see \cite{BB2022,Cai,Chid} and the references therein. 


In \cite{BB2022}, first, problem \eqref{eq1} is reformulated as the equivalent {\em augmented}  system $\bar{\mathcal{A}}u=\bar{b}$, where
\begin{equation}\label{eq183}
	\bar{\mathcal{A}}=\left[ {\begin{array}{*{20}{c}}
			{{A_{11}}}&{{A_{12}}}&0\\
			{{A_{21}}}&{{A_{22}} + \gamma {B^T}{Q^{ - 1}}B}&{{B^T}}\\
			0&B&0
	\end{array}} \right],
\end{equation}
and
$\bar{b}=(b_1; b_2+\gamma B^TQ^{-1}b_3; b_3)$, with $Q$ being an arbitrary SPD matrix and $\gamma > 0$  a user-defined
parameter. Then, the following preconditioner is proposed:
\begin{equation*}
	\mathcal{P}_{\gamma}=\left[ {\begin{array}{*{20}{c}}
			{{A_{11}}}&{{A_{12}}}&0\\
			0&{{A_{22}} + \gamma {B^T}{Q^{ - 1}}B}&{{B^T}}\\
			0&0&{ - \frac{1}{\gamma}Q}
	\end{array}} \right].
\end{equation*}
In practice, this preconditioner is applied inexactly by means of inner iterations and thus is used with Flexible GMRES (FGMRES) \cite{Saad}.
It was observed in \cite{BB2022} that FGMRES with the inexact augmented Lagrangian-based preconditioner ${\mathcal P}_\gamma$
exhibits faster convergence for larger values of $\gamma$.
However, for large $\gamma$ the total timings increase due to the fact that the condition number of the block ${{A_{22}} + \gamma {B^T}{Q^{ - 1}}B}$
goes up as $\gamma$ increases. Hence, the conjugate gradient (CG) method used for solving subsystems 
with coefficient matrix ${{A_{22}} + \gamma {B^T}{Q^{ - 1}}B}$ needs to be applied with a preconditioner. 
Explicitly forming ${{A_{22}} + \gamma {B^T}{Q^{ - 1}}B}$ to compute an incomplete Cholesky factorization leads to a considerably less sparse matrix and superlinear growth in
the fill-in in the incomplete factors, and thus to more expensive preconditioned CG (PCG) iterations; for further details see \cite[Table 2]{BB2022}.  

In this paper, we introduce a new class of preconditioners for $\bar{\mathcal{A}}$ given as follows:
\begin{equation}\label{new_pre}
\mathcal{P}_{\gamma,\alpha}= \left[{\begin{array}{*{20}{c}}
			{{A_{11}}}&{{A_{12}}}&0\\
			0&{{A_{22}} + \gamma {B^T}{Q^{ - 1}}B}&{(1 - {\gamma}{\alpha^{-1}}){B^T}}\\ 
			{0}&B&{ -{\alpha^{-1}}Q}
	\end{array}} \right]
\end{equation}
where  $\alpha$ and $\gamma$ are prescribed positive parameters such that $\alpha \ge \gamma$.  
As shown below, this form of preconditioning allows us to avoid the requirement  of forming the augmented block $A_{22} + \gamma {B^T}{Q^{ - 1}}B$,
which makes it possible to work with large values of $\gamma$. It is also highly effective in reducing the number of FGMRES iterations.

The rest of paper is organized as follows. In section \ref{sec2}, we derive some bounds for the eigenvalues
of the preconditioned matrix  $\bar{\mathcal{A}}\mathcal{P}_{\gamma,\alpha}^{-1}$. Numerical results are 
presented in section \ref{sec3} which illustrate the effectiveness of the proposed preconditioner in conjunction with FGMRES  in terms of both 
the number of iterations and the CPU time.  Section \ref{sec4} concludes the paper.\\

\noindent \textbf{Notations.} Given a square matrix $A$, the set of all eigenvalues (spectrum) of $A$ is denoted by $\sigma(A)$. When the spectrum of $A$ is real, we use $\lambda_{\min}(A)$ and $\lambda_{\max}(A)$ to respectively denote its minimum and maximum eigenvalues. When $A$ is symmetric positive (semi)definite, we write $A\succ 0$ ($A\succcurlyeq 0$). In addition, for two given matrices $A$ and $B$, the relation $A\succ B$ ($A\succcurlyeq B$) means $A-B\succ 0$ ($A-B\succcurlyeq 0$). Finally, for vectors $x$,  $y$ and $z$ of dimensions $n$, $m$ and $p$, $(x;y;z)$ will denote a column vector of dimension $n+m+p$. Throughout the paper, $I$ will denote the identity matrix (the size of
which will be clear from the context).

\section{Main results}\label{sec2}

In this section, we first obtain some bounds for the eigenvalues of $\mathcal{P}_{\gamma,\alpha}^{-1}\bar{\mathcal{A}}$ 
(which coincide with those of $\bar{\mathcal{A}}\mathcal{P}_{\gamma,\alpha}^{-1}$; recall that only right preconditioning 
is allowed with FGMRES ). We show that using large
values of $\alpha$ leads to a well clustered eigenvalue distribution for the preconditioned matrix. Then we explain how the preconditioner can be efficiently
implemented.  

\begin{theorem}\label{{thm1}}
Let $\bar{\mathcal{A}}$ and $\mathcal{P}_{\gamma,\alpha}$  be respectively defined by \eqref{eq183} and \eqref{new_pre}. The eigenvalues of  $\sigma(\mathcal{P}_{\gamma,\alpha}^{-1}\bar{\mathcal{A}})$ are all real and positive. More precisely, for an arbitrary $\lambda \in  \sigma(\mathcal{P}_{\gamma,\alpha}^{-1}\bar{\mathcal{A}}) $, we have
\[\frac{\xi^2 \alpha \lambda_{\min}(Q)}{\lambda_{\max}(Q)((\lambda_{\max}(A_{22})+\lambda_{\max}(A_{12}^TA_{11}^{-1}A_{12}))\lambda_{\min}(Q)+2\alpha\|B\|_2^2)} \le \lambda < 2+	\frac{\lambda_{\max}(A_{12}^TA_{11}^{ - 1}A_{12})}{\lambda_{\min}(A_{22})}
\]
with $
\xi = \mathop {\min }\limits_{} \left\{ {\left. \|By\|_2\,  \right| \, y \notin {\rm Ker}(B)}, \, y^*y=1 \right\}.
$
\end{theorem}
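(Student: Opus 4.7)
The plan is to analyze the generalized eigenvalue problem $\bar{\mathcal{A}}v = \lambda\,\mathcal{P}_{\gamma,\alpha}v$ with $v = (x;y;z)$ by reducing the three block rows to a scalar quadratic in $\lambda$. The three row equations $\bar{\mathcal{A}}v - \lambda\mathcal{P}_{\gamma,\alpha}v = 0$ read
\[
(1-\lambda)(A_{11}x + A_{12}y) = 0,\qquad (1-\lambda)By + \lambda\alpha^{-1}Qz = 0,
\]
together with a middle row involving $-A_{12}^T x$, $(1-\lambda)(A_{22}+\gamma B^T Q^{-1}B)y$, and $\bigl(1-\lambda+\lambda\gamma/\alpha\bigr)B^T z$. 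A standard argument using $A_{11},A_{22}\succ 0$ and $B$ of full row rank shows that $\bar{\mathcal{A}}$ is nonsingular, ruling out $\lambda = 0$. The first row then forces either $\lambda = 1$ (which plainly lies in the stated interval) or $x = -A_{11}^{-1}A_{12}y$; whenever $\lambda\ne 1$ the third row yields $z = \alpha(\lambda-1)\lambda^{-1}Q^{-1}By$.

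Substituting these into the middle row and collecting the contributions to $B^T Q^{-1}B\,y$ gives $\gamma(1-\lambda) + \gamma(\lambda-1) - \alpha(\lambda-1)^2/\lambda = -\alpha(\lambda-1)^2/\lambda$, so the $\gamma$-dependent terms cancel identically and, after multiplication by $\lambda$,
\[
\lambda Sy + \lambda(1-\lambda)A_{22}y - \alpha(\lambda-1)^2 B^T Q^{-1}B\,y = 0,\qquad S := A_{12}^T A_{11}^{-1}A_{12}\succcurlyeq 0.
\]
Testing against a normalized $y$ and writing $a = y^*Sy$, $b = y^*A_{22}y$, $c = y^*B^T Q^{-1}B\,y$ reduces the problem to the scalar quadratic
\[
(b+\alpha c)\lambda^2 - (a+b+2\alpha c)\lambda + \alpha c = 0,
\]
whose discriminant simplifies to $(a+b)^2 + 4a\alpha c\ge 0$. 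Hence $\lambda\in\mathbb{R}$, and Vieta's relations yield nonnegative sum and product of the roots, so both are nonnegative (positive when $c > 0$); the degenerate case $c = 0$, i.e., $y\in\ker B$, collapses the equation to $\lambda = 1 + a/b \ge 1$, which also lies in the claimed interval.

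For the upper bound, the estimate $\sqrt{(a+b)^2+4a\alpha c}\le a+b+2\alpha c$ gives $\lambda_+ \le (a+b+2\alpha c)/(b+\alpha c) = 1 + \alpha c/(b+\alpha c) + a/(b+\alpha c) < 2 + a/b \le 2 + \lambda_{\max}(S)/\lambda_{\min}(A_{22})$ via Rayleigh-quotient bounds. For the lower bound, rationalizing the smaller root produces $\lambda_- = 2\alpha c/[(a+b+2\alpha c) + \sqrt{\Delta}] \ge \alpha c/(a+b+2\alpha c)$; bounding the denominator above uniformly by $\lambda_{\max}(S) + \lambda_{\max}(A_{22}) + 2\alpha\|B\|_2^2/\lambda_{\min}(Q)$ (using $c\le\|B\|_2^2/\lambda_{\min}(Q)$ for unit $y$) and the numerator below by $\alpha\xi^2/\lambda_{\max}(Q)$ yields the stated lower bound after multiplying through by $\lambda_{\min}(Q)$. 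The principal obstacle is the algebraic cancellation of the $\gamma$-dependent contributions in the middle row, which is what renders the whole analysis independent of $\gamma$ and exposes $\alpha$ as the sole clustering parameter; once the scalar quadratic is in hand, the two bounds follow by elementary manipulations together with standard Rayleigh-quotient estimates.
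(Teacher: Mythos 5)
Your proposal is correct and follows essentially the same route as the paper: after the identical reduction $x=-A_{11}^{-1}A_{12}y$, $z=\alpha\frac{\lambda-1}{\lambda}Q^{-1}By$, your scalar quadratic $(b+\alpha c)\lambda^2-(a+b+2\alpha c)\lambda+\alpha c=0$ is exactly the paper's equation \eqref{eqq} cleared of its denominator (with the same $\gamma$-cancellation, which the paper exploits implicitly via $q+(1-\gamma\alpha^{-1})t=y^*A_{22}y+t$), and both bounds are obtained by the same rationalization of the smaller root and the same Rayleigh-quotient estimates. Your explicit nonsingularity argument and discriminant computation are harmless additions that do not change the substance.
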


\begin{proof} For simplicity, we set $\bar{A}_{22}=A_{22}+\gamma B^TQ^{-1}B$. Let $\lambda$ and $(x;y;z)$ be an arbitrary eigenpair of 
$\mathcal{P}_{\gamma,\alpha}^{-1}\bar{\mathcal{A}}$. Therefore, we have
		\begin{subequations}\label{eig}
		\begin{align}
		A_{11}x+A_{12}y& = \lambda (A_{11}x+A_{12}y)\label{eig1}\\
	A_{21}x+\bar{A}_{22}y +B^Tz & = \lambda (\bar{A}_{22}y + (1-{\gamma}{\alpha}^{-1})B^Tz)\label{eig2}\\
	By &= \lambda  (By - {\alpha}^{-1} Qz)\label{eig3}
		\end{align}
	\end{subequations}
From Eq. \eqref{eig1}, we deduce that either $\lambda=1$ or $x=-A_{11}^{-1}A_{12}y$. In particular, one can verify that $\lambda=1$ belongs to the spectrum of  
$\mathcal{P}_{\gamma,\alpha}^{-1}\bar{\mathcal{A}}$ with a corresponding eigenvector 
of the form $(x;y;0)$ provided that $x$ and $y$ are not simultaneously zero and that $x\in \text{Ker} (A_{21})$.  

In the rest of the proof, we assume that $\lambda \ne 1$. Notice that if $y \in \text{Ker} (B)$, then Eq. \eqref{eig3} implies that $z$ is the zero vector. Hence, from \eqref{eig2}, one can conclude that
\begin{equation}\label{eq2.2}
	\lambda=1+ \frac{{y^* A_{12}^TA_{11}^{- 1}{A_{12}}y}}{y^*\bar{A}_{22}y}=1+ \frac{{y^* A_{12}^TA_{11}^{- 1}{A_{12}}y}}{y^*{A}_{22}y}.
\end{equation}

In the sequel, without loss of generality, we may assume that $y\notin \text{Ker} (B)$ and $y^*y=1$. Since $\lambda \ne 1$, from \eqref{eig3}, we further obtain
\[
z=\alpha \left(\frac{\lambda-1}{\lambda}\right) Q^{-1}By.
\]
Substituting $z$ from the above relation and $x=-A_{11}^{-1}A_{12}y$ in \eqref{eig2}, we obtain
\[
A_{12}^TA_{11}^{-1}A_{12}y+(1-\lambda) \bar{A}_{22}y+\left(1-\lambda (1-{\gamma}{\alpha}^{-1})\right)\left(\frac{\lambda-1}{\lambda}\right) \left(\alpha B^TQ^{-1}By\right)=0\,,
\]
having in mind that $A_{21}=-A_{12}^T$. For ease of notation we set 
\[
p:= y^*A_{12}^TA_{11}^{-1}A_{12}y, \quad q:=y^*\bar{A}_{22}y \quad \text{and} \quad t:= \alpha y^*B^TQ^{-1}By.
\] 
Left-multiplying both sides of the preceding relation by $\lambda y^*$, we reach to the following quadratic equation:
\[
\left(1+  \frac{t}{q}(1-{\gamma}{\alpha}^{-1})\right)\lambda^2-\left(1+  \frac{t}{q}(1-{\gamma}{\alpha}^{-1})+\frac{t}{q}+\frac{p}{q} \right)\lambda +\frac{t}{q}=0
\]
or, equivalently, 
\begin{equation}\label{eqq}
	\lambda^2-b\lambda +c=0
\end{equation}
where
\begin{equation}\label{coef}
	b:=1+ \frac{p+t}{q+(1-\frac{\gamma}{\alpha})t} \quad \text{and} \quad c:=\frac{t}{q+(1-\frac{\gamma}{\alpha})t}.
\end{equation}

Notice that $b \ge 1+c$. As a result, it is immediate to see that the roots of \eqref{eqq} are real and given by
\[
{\lambda _1} = \frac{b  - \sqrt {b ^2 - 4c}}{2}
\quad
\text{and}
\quad
{\lambda _2} = \frac{b  + \sqrt {b ^2 - 4c }}{2}.
\]
It is not difficult to see that 
\begin{eqnarray}
	\nonumber	{\lambda _1} &= & \frac{{2c}}{{b  + \sqrt {{b ^2} - 4c } }} \\
   \nonumber           	& \ge & \frac{c }{b} ={\frac{\alpha y^*B^TQ^{-1}By}{y^*A_{22}y+p+2\alpha y^*B^TQ^{-1}By}} \\
              	\nonumber \\
\nonumber		& \ge &\frac{\alpha \lambda_{\min}(Q^{-1})\|By\|_2^2}{(\lambda_{\max}(A_{22})+\lambda_{\max}(A_{12}^TA_{11}^{-1}A_{12}))+2\alpha \|B\|_2^2\lambda_{\max}(Q^{-1})} \\
\nonumber \\
\nonumber  &\ge & \frac{\xi^2 \alpha \lambda_{\min}(Q)}{\lambda_{\max}(Q)((\lambda_{\max}(A_{22})+\lambda_{\max}(A_{12}^TA_{11}^{-1}A_{12}))\lambda_{\min}(Q)+2\alpha\|B\|_2^2)}
\end{eqnarray}
where
$
\xi = \mathop {\min }\limits_{} \left\{ {\left. \|By\|_2\, \right| \, y \notin {\rm Ker}(B)}, \, y^*y =1 \right\}.
$
Also, it can be observed that 
\begin{eqnarray}
\nonumber \lambda_2 &\le& b  \\
&= &1+\frac{p+\alpha y^*B^TQ^{-1}By}{y^*A_{22}y+\alpha y^*B^TQ^{-1}By} \label{eq2.5}\\
\nonumber \\
\nonumber &< & 2+  \frac{\lambda_{\max}(A_{12}^TA_{11}^{ - 1}A_{12})}{\lambda_{\min}(A_{22})}.
\end{eqnarray}
\end{proof}

\begin{remark}\label{rm1}
	{\rm
	As pointed out in \cite{BB2022},  for problems of small
	or moderate size, it can be numerically checked that  the condition $A_{22} \succ A_{12}^T A_{11}^{-1} A_{12}$ is satisfied for linear systems of the form (\ref{eq1}) arising from the finite element discretization of coupled Stokes-Darcy flow. Under this condition, for any $\lambda \in \sigma(\mathcal{P}_{\gamma,\alpha}^{-1}\bar{\mathcal{A}})$ and $\alpha >0$, from Eqs. \eqref{eq2.2} and \eqref{eq2.5}, we can deduce that $0<\lambda \le 1 + \tau$ for some $\tau <1$. 	
	The previous theorem implies that for fixed  $n$, $m$ and $p$, as  $\alpha \to \infty$, except for the possible eigenvalues given by \eqref{eq2.2}, any eigenvalue $\lambda$ 
	(with $\lambda \ne 1$) of the preconditioned matrix tends to 1 as $\alpha \to \infty$ since in this case the coefficients $b$ and $c$ in Eq. \eqref{coef}  go to $2$ and $1$, respectively. Consequently, the left-hand side of the quadratic equation \eqref{eqq} tends to $(\lambda-1)^2$ as $\alpha \to \infty$.  This behavior 
	appears to be confirmed in Fig.~\ref{fig:1}. While eigenvalues alone do not fully describe the convergence of Krylov subspace methods for nonsymmetric matrices, a well
	clustered spectrum away from zero is often associated with rapid convergence.

	\begin{figure}[t]
		\includegraphics[width=1\textwidth]{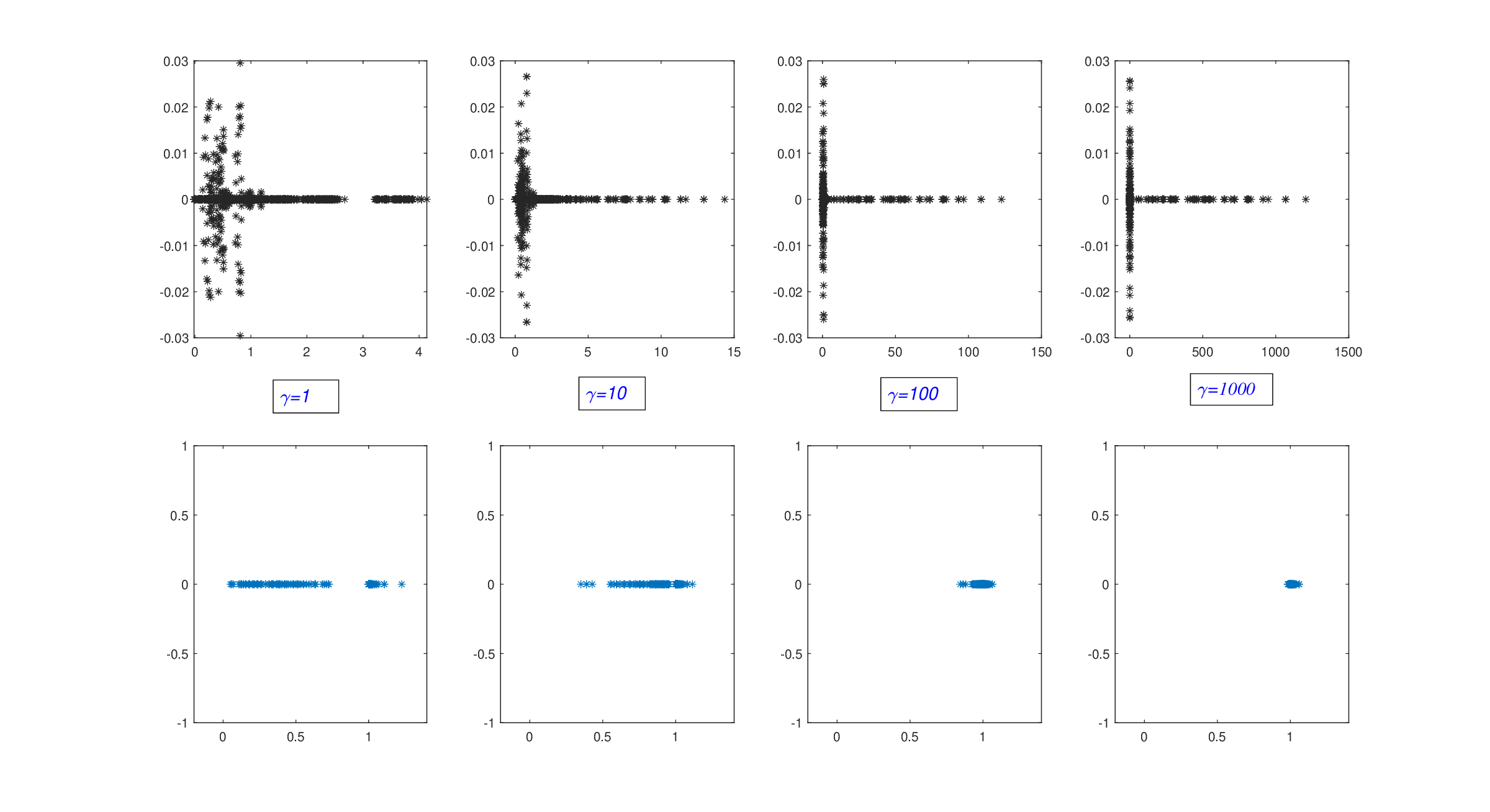}
		\caption{Eigenvalue distributions of $\bar {\mathcal A}$ (top)  versus that of the preconditioned matrix {${\mathcal P}_{\gamma,\alpha}^{-1}\bar {\mathcal A}$}
			(bottom) for different values of $\gamma$, with $Q=\text{diag}(M_p)$ and $\alpha=2\gamma$ for a 3D coupled Stokes-Darcy problem with 1695 degrees of freedom.}
		\label{fig:1}	
	\end{figure}
}
\end{remark}

We end this section by some brief comments on the implementation 
of the proposed preconditioner inside FGMRES. While we do not claim that this implementation is
optimal, it gives good results in practice and is competitive with more sophisticated multi-level solvers while having
much lower lower set-up costs.
To apply the preconditioner, in each inner iteration we need to 
solve linear systems of the form
\[
\mathcal{P}_{\gamma,\alpha}(w_1;w_2;w_3)=(r_1;r_2;r_3).
\] 
To this end, we use the following block factorization 
\[\mathcal{P}_{\gamma,\alpha}=\left[ {\begin{array}{*{20}{c}}
		I&0&0\\
		0&I&{\gamma {B^T}Q^{-1}}\\
		0&0&I
\end{array}} \right]\left[ {\begin{array}{*{20}{c}}
		{{A_{11}}}&{{A_{12}}}&0\\
		0&{{A_{22}}}&{{B^T}}\\
		0&B&{ - {\alpha ^{ - 1}}Q}
\end{array}} \right]\] 
which allows one to work with larger values of $\gamma$. In fact, we do not have to solve 
linear systems with coefficient matrix  $A_{22} + \gamma {B^T}{Q^{ - 1}}B$. To apply the preconditioner, we need to solve subsystems of the form
\begin{equation}\label{inside}
	\left[ {\begin{array}{*{20}{c}}
			{{A_{22}}}&{{B^T}}\\
			B&{ - {\alpha ^{ - 1}}Q}
	\end{array}} \right]\left[ {\begin{array}{*{20}{c}}
			{{w_2}}\\
			{{w_3}}
	\end{array}} \right] = \left[ {\begin{array}{*{20}{c}}
			{{r_2-\gamma B^TQ^{-1}r_3}}\\
			{{r_2}}
	\end{array}} \right].
\end{equation}
(note that these systems are of stabilized Stokes type, see \cite{Elman}).
To do so, we use GMRES (with a loose stopping residual tolerance $0.1$) in conjunction with the following block triangular preconditioner:
\[P = \left[ {\begin{array}{*{20}{c}}
		\hat{A}_{22}&0\\
		B&{ - \hat S}
\end{array}} \right]\]
in which  $\hat A_{22}$ and
$\hat S$ are approximations of 
$A_{22}$ and $S=\alpha^{-1}Q+M_p$ obtained via incomplete Cholesky factorizations constructed by MATLAB
function
``\verb|ichol|(., \verb|opts|)" and MATLAB backslash operator ``$\backslash$", 
with {\verb|opts.type| ='\verb|ict|'} and {\verb|opts.droptol| =$\epsilon_i$} where $\epsilon_i$ is respectively equal to  $10^{-3}$ and $10^{-2}$ for $i=1,2$ where $M_p$ denotes the mass matrix coming  from the Stokes pressure space. We further comment that the matrix $Q$ in our numerical implementation is $\text{diag}(M_p)$ and $\alpha=2\gamma$, which means that effectively only the parameter $\gamma$ has to be set by the user. 
As seen, we also need to solve the linear systems with the coefficient matrix $A_{11}$ which is solved by PCG with an incomplete 
Cholesky factorization, as in \cite{BB2022}. The inner PCG iteration was terminated when the relative residual norm was below $10^{-1}$ or when the maximum number of $5$  iterations was reached. These parameter choices, while probably not optimal, were find to be a good compromise in terms of simplicity of implementation, low set-up costs and 
good preconditioner effectiveness and robustness.

\section{Numerical experiments}\label{sec3}

In this section we report on the performance of inexact variants of the proposed block preconditioner 
using a test problem taken from \cite[Subsection 5.3]{Chid}, which corresponds to a 3D coupled flow problem with large jumps in the permeability
in the porous flow region. 
All computations were carried out on a computer with an Intel Core i7-10750H CPU @ 2.60GHz processor and 16.0GB RAM using MATLAB.R2020b.

In Tables 1 and 2 we report the total required number of outer FGMRES iterations and elapsed CPU time (in seconds) under ``Iter" and ``CPU", respectively.
The total number of inner GMRES (PCG) iterations to solve subsystems \eqref{inside} (respectively, with coefficient matrix $A_{11}$)
is reported under Iter$_{{in}}$ (Iter$_{{pcg}}$). No restart is used for either FGMRES or GMRES iterations.

The initial guess is taken to be the zero vector and the iterations are stopped as soon as
$$\|\bar{\mathcal{A}}u_k-\bar{b}\|_2{\le 10^{-7}} \|\bar{b}\|_2$$
where $u_k$ is the computed $k$-th approximate solution. 
In the  tables, we also include the relative error
\[
\mathrm{Err}:=\frac{{\|u_k-u^\ast\|_2}}{{\|u^\ast\|_2}},
\]
where $u^\ast$ and $u_k$ are, respectively, the exact solution and  its approximation obtained in the $k$-th iterate.
In addition, we have used right-hand sides corresponding to random solution vectors and averaged results over
10 test runs, rounding the iteration counts to the nearest integer. 

The results show that in all cases, the outer FGMRES iteration displays mesh-independent convergence. The number of inner
iterations, on the other hand, increases as the mesh is refined, especially for the largest test problem, resulting in less than perfect 
scalability of the solver. Nevertheless, the total solution time is about 40\% less, for the largest problem, than the best results
reported in \cite{BB2022}, obtained using the multi-level preconditioner ARMS in the solution of the inner subproblems,
showing the considerable advantage of the new augmented Lagrangian preconditioner over the original
version.

\begin{table*}[t]
	\caption{Results for FGMRES in conjunction with preconditioner $\mathcal{P}_{\gamma,2\gamma}$.}\label{tab1}
	\resizebox{\textwidth}{!}{	\centering
		\begin{tabular}{@{}lcccccccccccc@{}}
			\toprule
			&& \multicolumn{5}{c}{{$\gamma =1$}} &&  \multicolumn{5}{c}{{$\gamma =10$}} \\
			\cmidrule{3-7} \cmidrule{9-13}
			size && \multicolumn{2}{c}{{FGMRES}} & \phantom{abc} &\multicolumn{2}{c}{Inner iterations} && \multicolumn{2}{c}{{FGMRES}} & \phantom{abc} &\multicolumn{2}{c}{Inner iterations} \\
			\cmidrule{3-4}\cmidrule{6-7} \cmidrule{9-10}\cmidrule{12-13}
			&& Iter (CPU) &Err&& Iter$_{{in}}$ & Iter$_{{pcg}}$  && Iter (CPU) & Err && Iter$_{in}$ & Iter$_{{pcg}}$\\
			\midrule
			1695  && 22(0.05) & 6.2804e-05  && 128  & 56 && 12(0.02) & 7.3326e-06 &&75 & 33 \\
			10809 && 20(0.52) & 1.0262e-04  && 130  & 49 && 12(0.34) & 4.1502e-06 &&88 & 32  \\
			76653 && 19(4.66) & 1.9242e-04  && 148  & 58 && 11(3.02) & 2.2011e-05 &&99 & 35   \\
			576213&& 19(64.9) & 3.8861e-04  && 248  & 85 && 11(38.1) & 5.5276e-05 &&146 & 45   \\
			\bottomrule	
	\end{tabular}}
\end{table*} 
		
\begin{table*}[t]
	\caption{Results for FGMRES  in conjunction with preconditioner $\mathcal{P}_{\gamma,2\gamma}$.}\label{tab2}
	\resizebox{\textwidth}{!}{	\centering
		\begin{tabular}{@{}lcccccccccccc@{}}
			\toprule
			&& \multicolumn{5}{c}{{$\gamma=100$}} &&  \multicolumn{5}{c}{{$\gamma=1000$}} \\
			\cmidrule{3-7} \cmidrule{9-13}
			size && \multicolumn{2}{c}{{FGMRES}} & \phantom{abc} &\multicolumn{2}{c}{Inner iterations} && \multicolumn{2}{c}{{FGMRES}} & \phantom{abc} &\multicolumn{2}{c}{Inner iterations} \\
			\cmidrule{3-4}\cmidrule{6-7} \cmidrule{9-10}\cmidrule{12-13}
			&& Iter (CPU) &Err&& Iter$_{{in}}$ & Iter$_{{pcg}}$  && Iter (CPU) & Err && Iter$_{in}$ & Iter$_{{pcg}}$\\
			\midrule
			1695  && 11(0.02) & 2.6911e-06  && 63  & 29 && 11(0.02) & 2.3876e-06 &&62 & 28 \\
			10809 && 11(0.29) & 1.1382e-06  && 71  & 26 && 11(0.28) & 1.5137e-06 &&69 & 25  \\
			76653 && 11(2.70) & 5.1225e-06  && 86  & 30 && 11(2.66) & 2.9047e-06 &&86 & 29   \\
			576213&& 11(36.4) & 1.4794e-05 && 133  & 38 && 11(35.6) & 1.3158e-05 &&130 & 37   \\
			\bottomrule	
	\end{tabular}}
\end{table*}

\section{Conclusions}\label{sec4}
In this paper we have introduced a new augmented Lagrangian-based preconditioner for block three-by-three linear systems, with a focus
on the linear systems arising from finite element discretizations of the coupled Darcy-Stokes flow problem. The main advantage of this
preconditioner is that it avoids the need to explicitly form the augmented block $A_{22} + \gamma B^TQ^{-1}B$. Theoretical analysis shows a
strong clustering of the spectrum of the (exactly) preconditioned matrix, and numerical experiments on a challenging 3D model problem show that the corresponding inexact
preconditioner can result in much faster convergence than previous versions of the augmented Lagrangian-based preconditioner. 

Future work will focus on replacing the incomplete Cholesky inner preconditioners with multilevel preconditioners, in order to obtain better
scalability of the solver.\\




\noindent {\bf{ Acknowledgement.} } The authors would like to thank Scott Ladenheim for providing the test problem. The work of M.~Benzi was supported in part by ``Fondi per la Ricerca di Base" of the Scuola Normale Superiore. Thanks also to two anonymous referees for their helpful suggestions.

\bibliographystyle{abbrv}

\begin{thebibliography}{99}  

\bibitem{BB2022}	
F. P. A. Beik and M. Benzi, Preconditioning techniques for the coupled Stokes-Darcy problem: spectral and field-of-values analysis, \textit{Numerische Mathematik}, 150 (2022) 257--298.
	
	
\bibitem{Cai}
M.~Cai, M.~Mu and J.~Xu,
{Preconditioning techniques for a mixed Stokes-Darcy model in porous media applications},
\textit{Journal of Computational and Applied Mathematics},
233 (2009) 346--355.

\bibitem{Chid}
P.~Chidyagwai, S. Ladenheim, and D.~B.~Szyld,
{Constraint preconditioning for the coupled Stokes-Darcy system},
\textit{SIAM Journal on Scientific Computing},
38 (2016) A668--A690.

\bibitem{Elman}
H.~C.~Elman, D.~J.~Silvester, and A.~J.~Wathen, Finite Elements and Fast Iterative Solvers: with applications in incompressible fluid dynamics, Oxford University Press, USA, 2005.

\bibitem{Saad}
Y.~Saad,
{A flexible inner-outer preconditioned GMRES algorithm},
\textit{SIAM Journal on Scientific Computing}, 14 (1993), 461--469.

\end{thebibliography}


\end{document}